\documentclass[11pt,a4paper]{article}
\usepackage{amsmath,amsfonts,amsthm,amssymb, mathtools}
\usepackage{mathrsfs, graphicx,color,latexsym, tikz, calc,
}
\usepackage{tikz-cd}
\usepackage{graphicx}
\usepackage{epstopdf}
\usepackage{enumerate}
\usepackage{caption}
\usepackage{stmaryrd}
\usepackage{hyperref}

\usetikzlibrary{shadows}
\usetikzlibrary{patterns,arrows,decorations.pathreplacing}
\usepackage{ulem}

\numberwithin{equation}{section}
\newtheorem{theorem}{Theorem}[section]
\newtheorem{lemma}[theorem]{Lemma}
\newtheorem{definition}[theorem]{Definition}
\newtheorem{problem}[theorem]{Problem}

\newtheorem{corollary}[theorem]{Corollary}

\allowdisplaybreaks[4]

\date{}
\title{\bf Bollob\'{a}s-type inequalities for subspaces via weight invariance\Large \footnote{L. Feng was supported by
the NSFC (Nos. 12271527 and 12471022).
  T. Wu  was supported by NSF of Qinghai Province (No. 2025-ZJ-902T), and   NSFC (No. 12261071). E-mail addresses: \url{liuzymath@163.com}(Z. Liu), \url{fenglh@163.com} (L. Feng), \url{mathtzwu@163.com} (T. Wu).
}}
\author{
{\small Zhiyi Liu$^a$, \ \  Lihua Feng$^a$,  \ \ Tingzeng Wu$^{b,c}$
  }\\[2mm]
\small $^a$School of Mathematics and Statistics, HNP-LAMA, Central South University\\
 \small Changsha, Hunan, 410083, China\\
 \small $^b$School of Mathematics and Statistics,
 Qinghai Minzu University\\
  \small Xining, Qinghai, 810007,  China\\
 \small $^c$Qinghai Institute of Applied Mathematics, Xining, Qinghai, 810007,   China\\
}

\begin{document}
\maketitle
\begin{abstract}

Let $V$ be an $n$-dimension real vector space with a direct sum decomposition $V = V_1 \oplus \cdots \oplus V_r$. Let  $\mathcal{P} = \{(A_i, B_i) : i \in [m]\}$
be  a skew Bollob\'as system of subspaces of $V$ such that each $i\in [m]$, 
 $ A_i = \bigoplus_{k=1}^r (A_i \cap V_k)$ and $ B_i = \bigoplus_{k=1}^r (B_i \cap V_k)$.
 We prove that
$$\sum_{i=1}^{m} \prod_{k=1}^{r} \left[ \binom{a_{i,k} + b_{i,k}}{a_{i,k}} (1 + a_{i,k} + b_{i,k})^{-1} \right] \leq 1,$$
where $a_{i,k} = \dim(A_i \cap V_k)$ and $b_{i,k} = \dim(B_i \cap V_k)$. This extends a recent result of Yue  from set systems to finite dimensional subspaces.
We then consider  Tuza's theorem on weak Bollob\'as system  for $d$-tuples.
We give an alternative proof of the original set version of Tuza, and also establish its vector space analogue.
Precisely,  let $\mathcal{P} = \{(A_i^{(1)}, \ldots, A_i^{(d)}) : i \in [m]\}$ be a skew Bollob\'as system  of $d$-tuples of subspaces of finite dimensional space $V$ with  $a^{(\ell)}_i=\dim (A_i^{(\ell)})$.  Then, for any positive real numbers $p_1, \ldots, p_d$ satisfying $p_1 + \cdots + p_d = 1$,  we prove that
$
\sum_{i=1}^{m} \prod_{\ell=1}^{d} p_{\ell}^{a_i^{(\ell)}} \leq 1.
$
\end{abstract}

{\bf AMS Classification}:  05C65; 05D05

 {\bf Key words}: Skew Bollob\'as system; Tuza's theorem; Subspace; Weight invariance
\section{Introduction}\label{se1}
As one of the  central and fast growing topics among extremal combinatorics in the last couple of years, extremal set theory seeks to determine the maximum or minimum size of a family of sets subject to specific constraints. One of the most influential results in this area  is the Bollob\'as's celebrated Two Families Theorem (or set pair inequality) \cite{B65} proved   in 1965, which gives a fundamental inequality for families of set pairs with prescribed intersection constraints.
%This theorem, notable for its non-uniform nature and its proof via a now-classic combinatorial shuffling or "wedge" argument, has become a central tool in extremal set theory. Its power and generality have led to numerous generalizations and a wide array of applications, ranging from bounding the size of codes to analyzing graph coloring problems and the structure of partially ordered sets.
We begin by introducing the concept of a
Bollob\'{a}s system.

\begin{definition}\label{de1}
Let $[n]=\{1,\ldots, n\}$ and $\mathcal{P}=\{(A_i,B_i):i\in[m]\}$ be a collection of $m$ pairs of subsets of $[n]$. Then $\mathcal{P}$ is called a \textit{Bollob\'{a}s system} if
\begin{itemize}
    \item[(i)]
     for any $\ i\in[m]$,  $A_i\cap B_i=\emptyset$; and

     \item[(ii)]  for any $\ i\ne j\in[m]$, $A_i\cap B_j\neq \emptyset$.
     \end{itemize}
\end{definition}
Bollob\'{a}s proved the following landmark result, with a striking feature  that
the bound does not depends on the size of the ground set.

\begin{theorem}[Bollob\'{a}s \cite{B65}]\label{B65}
  Suppose that $A_i,B_i\subseteq[n],~i\in[m]$, and  $\mathcal{P}=\{(A_i,B_i):i\in[m]\}$ is a Bollob\'{a}s system with $|A_i|=a_i$ and $|B_i|=b_i$ for every $i$. Then
    $$
  \sum_{i=1}^{m}\frac{1}{\binom{a_i+b_i}{a_i}}\le 1.
    $$
\end{theorem}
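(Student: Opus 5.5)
We prove Theorem~\ref{B65} with the classical random permutation (Lubell-type) argument. We may assume every $A_i$ and $B_i$ is nonempty, except in the degenerate cases. If some $a_i+b_i=0$, then $A_i=B_i=\emptyset$, and condition (ii) forces $m=1$, so the bound holds trivially. If $A_i=\emptyset$ for some $i$, condition (ii) gives $m=1$ once more. If $B_j=\emptyset$ for some $j$, condition (ii) applied to pairs $(i,j)$ gives $m=1$ as well. So we may take $m\ge 2$ and all sets nonempty.

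Choose a permutation $\pi$ of $[n]$ uniformly at random, and view it as a linear order on $[n]$. Let $E_i$ be the event that every element of $A_i$ precedes every element of $B_i$ in this order. Since $A_i\cap B_i=\emptyset$ by condition (i), only the relative order of the $a_i+b_i$ elements of $A_i\cup B_i$ matters. All $(a_i+b_i)!$ relative orders are equally likely, and exactly $a_i!\,b_i!$ of them place $A_i$ entirely before $B_i$. Hence
$$\Pr(E_i)=\frac{a_i!\,b_i!}{(a_i+b_i)!}=\binom{a_i+b_i}{a_i}^{-1}.$$

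The key step is to show that the events $E_1,\dots,E_m$ are pairwise disjoint. Suppose $E_i$ and $E_j$ both occur for some $i\ne j$. Let $x_i$ be the position of the last element of $A_i$, and define $x_j$ in the same way for $A_j$. Without loss of generality $x_i\le x_j$. By condition (ii) there is an element $y\in A_i\cap B_j$. Since $y\in A_i$, its position is at most $x_i\le x_j$. But $E_j$ requires every element of $B_j$ to lie strictly after all of $A_j$, so $y$ would have to come after position $x_j$. This is a contradiction, so the events are disjoint.

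Therefore
$$\sum_{i=1}^m \binom{a_i+b_i}{a_i}^{-1}=\sum_{i=1}^m\Pr(E_i)=\Pr\Bigl(\bigcup_{i=1}^m E_i\Bigr)\le 1,$$
which proves the theorem. The only delicate point is the disjointness argument. The difficulty there is choosing the right comparison, the last element of each $A$-set, so that the intersection condition (ii) is used in only one direction. This is also why the argument works equally well for skew Bollob\'as systems, where (ii) is only assumed for $i<j$: we then compare in the appropriate direction after ordering the indices.
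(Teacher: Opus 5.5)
Your proof is correct. It is the standard Lubell--Katona random-permutation argument. The degenerate cases, the computation $\Pr(E_i)=\binom{a_i+b_i}{a_i}^{-1}$, and the disjointness step (the WLOG $x_i\le x_j$ followed by an element of $A_i\cap B_j$) all go through.

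The paper gives no proof of this statement. It quotes Bollob\'as's 1965 theorem as background, so there is no proof of its own to match. Its weight-invariance method could not have produced this inequality anyway, for two reasons.
\begin{itemize}
\item With $s=a_i+b_i$ and $a=a_i$, splitting a pair along a new element $x$ gives
$\binom{s+1}{a+1}^{-1}+\binom{s+1}{a}^{-1}=\frac{s+2}{s+1}\binom{s}{a}^{-1}$.
So the weight $\binom{a+b}{a}^{-1}$ strictly increases and is not invariant. This is exactly why Yue's inequality and Theorem~\ref{main} carry the extra factor $(1+a+b)^{-1}$.
\item The two children $(A_i\cup\{x\},B_i)$ and $(A_i,B_i\cup\{x\})$ violate the symmetric condition. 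The $A$-part of the second meets the $B$-part of the first in $A_i\cap B_i=\emptyset$. Only a skew system survives the operation.
\end{itemize}
Your argument obtains the sharp weight by using the two-directional condition (ii) in an essential way. It also depends on an ordering of the ground set. The paper's method gives up sharpness in exchange for handling one-directional conditions and subspaces.

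Your closing remark about skew systems is false and should be dropped. In the disjointness step, whether $x_i\le x_j$ or $x_j\le x_i$ is decided by the random permutation, not by the indices. If $i<j$ but $x_j<x_i$, you would need $A_j\cap B_i\neq\emptyset$, and a skew system does not provide this. Ordering the indices cannot fix it.

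The inequality itself fails for skew systems. Take $n=1$, $(A_1,B_1)=(\{1\},\emptyset)$ and $(A_2,B_2)=(\emptyset,\{1\})$. This is a skew Bollob\'as system with $\sum_i\binom{a_i+b_i}{a_i}^{-1}=2>1$. The paper notes this failure right after Alon's theorem, and the example also breaks your degenerate-case reasoning that $B_j=\emptyset$ forces $m=1$. None of this affects your proof of the stated (symmetric) theorem.
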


For the uniform case with  $|A_i|=a$ and $|B_i|=b$ for all $i\in [m]$, Theorem \ref{B65} implies  the sharp bound $m\leq \binom{a+b}{a}$, which was proved by Jaeger and Payan \cite{J71} in 1971, and by Katona \cite{K74} in 1974 independently. Theorem \ref{B65} has a wide range of applications, such as covering problems for graphs \cite{H64, O77}, counting cross-intersecting families \cite{F18},  and transversals of hypergraphs \cite{T84, T85, T96}.
 Theorem \ref{B65} also has been generalized in a number of different directions in the literature.
 A natural variant arises when the intersection condition is required only in one direction in the following manner.
\begin{definition}\label{def0103}
    Let $\mathcal{P}=\{(A_i,B_i):i\in[m]\}$ be a collection of $m$ pairs of subsets of $[n]$. Then  $\mathcal{P}$ is called a \textit{skew Bollob\'{a}s system} if
    \begin{itemize}
    \item[(i)] for any $\ i\in[m]$,   $A_i\cap B_i=\emptyset$; and

     \item[(ii)]   for any $\ i<j\in[m]$,  $A_i\cap B_j \neq \emptyset$.
     \end{itemize}
\end{definition}

For the uniform case with  $|A_i|=a$ and $|B_i|=b$ for all $i\in [m]$, Lov\'asz \cite{L77}, and independently   Frankl \cite{F82},  proved that
 $$
 m\leq \binom{a+b}{a}
 $$
 holds under the weaker conditions in Definition \ref{def0103}. Alon \cite{A851} further extended this result  to the setting of $r$-partitions.

\begin{theorem}[Alon \cite{A851}]
    Let $[n]$ be the disjoint union of some sets $X_1,\ldots,X_r$. Suppose that $A_i,B_i\subseteq[n],\ i\in[m]$, and $\mathcal{P}=\{(A_i,B_i):i\in[m]\}$ is a skew Bollob\'{a}s system satisfying that
    $$
   |A_i\cap X_k|=a_k,~|B_i\cap X_k|=b_k,~\forall ~i\in[m], k\in[r].
    $$
    Then
    $$
    m\le\prod_{k=1}^{r}\binom{a_k+b_k}{a_k}.
    $$
\end{theorem}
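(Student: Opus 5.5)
The natural route is Lovász's exterior-algebra method, run once for each block $X_k$ and then combined with a tensor product. First I put the skew Bollobás system in general position block by block. For each $k$, set $n_k=|X_k|$ and work in $W_k=\mathbb{R}^{a_k+b_k}$. Choose vectors $\{v_x: x\in X_k\}\subset W_k$ in general position, meaning any $a_k+b_k$ of them are linearly independent. For each $i$ define
$$\alpha_{i,k}=\bigwedge_{x\in A_i\cap X_k} v_x\in \textstyle\bigwedge^{a_k}W_k,\qquad \beta_{i,k}=\bigwedge_{x\in B_i\cap X_k} v_x\in \textstyle\bigwedge^{b_k}W_k,$$
with the elements in each wedge taken in some fixed order. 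Then $\alpha_{i,k}\wedge\beta_{j,k}\in\bigwedge^{a_k+b_k}W_k\cong\mathbb{R}$, and it is nonzero exactly when $(A_i\cap X_k)\cap(B_j\cap X_k)=\emptyset$. The reason is that a repeated vector kills the wedge, while $a_k+b_k$ distinct vectors in general position give a nonzero top form.

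Next I form tensor products. Let $\alpha_i=\alpha_{i,1}\otimes\cdots\otimes\alpha_{i,r}$, which lies in the space $U=\bigotimes_{k}\bigwedge^{a_k}W_k$, of dimension $\prod_k\binom{a_k+b_k}{a_k}$. For each $j$ define the linear functional $\phi_j$ on $U$ by
$$\phi_j(u_1\otimes\cdots\otimes u_r)=\prod_{k}(u_k\wedge\beta_{j,k}),$$
which is well defined by multilinearity. Then $\phi_j(\alpha_i)=\prod_k(\alpha_{i,k}\wedge\beta_{j,k})$, and this is nonzero if and only if $A_i\cap B_j\cap X_k=\emptyset$ for every $k$. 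Since the $X_k$ partition $[n]$, that condition is the same as $A_i\cap B_j=\emptyset$. So $\phi_i(\alpha_i)\ne0$ for every $i$, and $\phi_j(\alpha_i)=0$ whenever $i<j$.

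Finally I show the $\alpha_i$ are linearly independent, which gives $m\le\dim U$. Suppose $\sum_i c_i\alpha_i=0$ with not all $c_i$ zero, and let $j$ be the largest index with $c_j\ne0$. Applying $\phi_j$ kills every term with $i<j$, leaving $c_j\phi_j(\alpha_j)=0$. This forces $c_j=0$, a contradiction.

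The only delicate point is the general-position setup, in particular the sign and ordering conventions that keep the wedges well defined. These do not affect whether a product is zero, so I expect this step to be routine.
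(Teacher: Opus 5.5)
Your proof is correct. It is the standard argument for this theorem: Lov\'asz's exterior-algebra method applied in each block $X_k$, glued together by a tensor product, with the triangular (skew) condition forcing linear independence of the $\alpha_i$. The details you leave implicit are routine:
\begin{itemize}
\item general-position families exist in $\mathbb{R}^{a_k+b_k}$ (e.g.\ points on the moment curve);
\item one fixes an identification $\bigwedge^{a_k+b_k}W_k\cong\mathbb{R}$, so that each $\phi_j$ is a genuine real-valued linear functional;
\item blocks with $a_k=0$ or $b_k=0$ are covered by $\bigwedge^0 W_k=\mathbb{R}$.
\end{itemize}

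The paper does not prove this statement. It quotes it from Alon, and likewise imports its subspace analogue, Theorem~\ref{adt1}, from Wu, Li, Lu and Feng as a black box. The paper's own tool, weight invariance, works differently: it does not produce uniform bounds but consumes them. It reduces a non-uniform weighted sum to a system of full pairs, splits that system into uniform subfamilies $I_{\mathbf a}$, and bounds each $|I_{\mathbf a}|$ by exactly the kind of inequality you prove. So your linear-algebra argument is the piece the paper takes as given.

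Your argument also transfers almost verbatim to Theorem~\ref{adt1}:
\begin{itemize}
\item replace the general-position points of $X_k$ by a generic linear map $V_k\to\mathbb{R}^{a_k+b_k}$ that is injective on each of the finitely many $(a_k+b_k)$-dimensional subspaces $(A_i\cap V_k)\oplus(B_j\cap V_k)$ with $A_i\cap B_j\cap V_k=0$;
\item take wedges of the images of bases;
\item use that the hypotheses $A_i=\bigoplus_k(A_i\cap V_k)$ and $B_j=\bigoplus_k(B_j\cap V_k)$ give $A_i\cap B_j=\bigoplus_k(A_i\cap B_j\cap V_k)$, which plays the role of the fact that the $X_k$ partition $[n]$.
\end{itemize}
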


For the nonuniform case with $a_k$ (resp. $b_k$) being not a constant, however, things become a bit complicate. The natural analogue of Theorem \ref{B65} fails for skew Bollob\'{a}s systems without further assumptions. It is exhilarating that, Scott and Wilmer \cite{S21}  achieved a breakthrough and showed a monotone version holds under additional monotonicity conditions.
  Heged\H us and Frankl \cite{H24} proved that for a skew Bollob\'{a}s system $\mathcal{P}=\{(A_i,B_i):i\in[m]\}$   of subsets of $[n]$ with $|A_i|=a_i$ and $|B_i|=b_i$ for every $i$, then
    $$
  \sum_{i=1}^{m} \frac{1}{\binom{a_i+b_i}{b_i}}\le n+1.
    $$
This bound is tight but depends on $n$.  It was recently  strengthened by
Yue \cite{Y26}, who showed the following stronger inequality independent of $n$:
    $$
    \sum_{i=1}^{m}\frac{1}{(1+a_i+b_i)\binom{a_i+b_i}{a_i}}\le 1.
    $$
In the same paper, Yue \cite{Y26} also extended the Heged\H us-Frankl  inequality  to the setting of  $r$-partitions.

\begin{theorem}[Yue \cite{Y26}]\label{Y26}
    Suppose that $[n]$ is the disjoint union of some sets $X_1,\ldots,X_r$ with $|X_k|=n_k$.
    For  $A_i,B_i\subseteq[n],\ i\in[m]$, let  $\mathcal{P}=\{(A_i,B_i):i\in[m]\}$ be a skew Bollob\'{a}s system satisfying that
    $$ |A_i\cap X_k|=a_{i,k},~|B_i\cap X_k|=b_{i,k},~~\forall~ i\in[m],~k\in[r].
    $$
    Then
    $$
   \sum_{i=1}^{m}\prod_{k=1}^{r}\frac{1}{\binom{a_{i,k}+b_{i,k}}{a_{i,k}}}\le\prod_{k=1}^{r}(1+n_k)\le \left(1+\frac{n}{r}\right)^r.
    $$
\end{theorem}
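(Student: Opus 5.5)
We prove Theorem \ref{Y26} by reducing it to the ordering argument behind the Heged\H us--Frankl inequality, applied in each block separately. The first inequality is the substantive one. The second, $\prod_k(1+n_k)\le(1+n/r)^r$, is just AM--GM applied to the numbers $1+n_k$, whose sum is $n+r$.

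For the first inequality, fix an ordering $\pi_k$ of each block $X_k$ and a threshold $t_k\in\{0,1,\ldots,n_k\}$. This gives $\prod_k n_k!\,(n_k+1)$ configurations $(\pi,t)$ in total. Say the pair $(A_i,B_i)$ is \emph{good} for $(\pi,t)$ if, for every $k$, the set $A_i\cap X_k$ lies among the first $t_k$ elements of $X_k$ under $\pi_k$ and $B_i\cap X_k$ lies among the last $n_k-t_k$ elements.

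\emph{Step 1.} For a fixed configuration, at most one index $i$ is good. Suppose $i<j$ were both good. Skewness gives some $x\in A_i\cap B_j$, say $x\in X_k$. Goodness of $i$ puts $x$ at position at most $t_k$ in $\pi_k$, while goodness of $j$ puts $x$ at position greater than $t_k$. This is a contradiction. Note that the threshold is shared by all pairs, so the order of the indices $i<j$ never matters. This is exactly why the skew condition suffices.

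\emph{Step 2.} Count, for a fixed $i$, the configurations in which $i$ is good. Work in each block independently. In $X_k$ we need $t_k$ to separate $A_i\cap X_k$ from $B_i\cap X_k$. Given $\pi_k$, such a $t_k$ exists exactly when every element of $A_i\cap X_k$ precedes every element of $B_i\cap X_k$. The probability of this over a uniform random $\pi_k$ is $1/\binom{a_{i,k}+b_{i,k}}{a_{i,k}}$. Whenever it holds, at least one valid $t_k$ exists. Since we sum over all thresholds, counting at least one is enough for a lower bound. So $i$ is good for at least
$$\prod_k \frac{n_k!}{\binom{a_{i,k}+b_{i,k}}{a_{i,k}}}$$
configurations.

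\emph{Step 3.} By Step 1, the sets of configurations for different $i$ are disjoint. Summing the Step 2 counts over $i$ and comparing with the total $\prod_k n_k!(n_k+1)$ gives the first inequality.

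I expect no real obstacle here. The only point needing care is Step 1, and there the key observation is simply that the threshold is common to all $i$, so it does not depend on the index order.
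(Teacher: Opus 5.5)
Your proof is correct, but it takes a genuinely different route from the paper.

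\textbf{The paper's route.} The paper does not prove this theorem directly. It obtains it from the stronger Corollary~\ref{cor1}, using $1+a_{i,k}+b_{i,k}\le 1+n_k$. That corollary in turn comes from embedding the sets into $\mathbb{R}^n$ and applying Theorem~\ref{main}. The proof of Theorem~\ref{main} is a weight-invariance argument:
\begin{itemize}
\item a non-full pair is repeatedly replaced by $(A_i\oplus\langle x\rangle,B_i)$ and $(A_i,B_i\oplus\langle x\rangle)$, which leaves the weight unchanged, until all pairs are full;
\item the full pairs are grouped by their profile $\mathbf{a}$;
\item each of the $\prod_k(1+n_k)$ groups is bounded by the uniform estimate of Theorem~\ref{adt1}, and the contributions are summed.
\end{itemize}

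\textbf{What each approach buys.} Your permutation-and-threshold double count is elementary and self-contained. It needs neither the uniform bound of Theorem~\ref{adt1} nor the potential-function argument that the paper uses to guarantee termination. On the other hand, it cannot leave the set setting, because it orders ground-set elements. The paper's machinery is what makes the subspace version possible.

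\textbf{A sharper count.} Your Step~2 throws away more than necessary. A pair $(\pi_k,t_k)$ is the same thing as a linear order of $X_k\cup\{\ast_k\}$, obtained by inserting a marker $\ast_k$ right after position $t_k$. Index $i$ is good in block $k$ exactly when $A_i\cap X_k$ precedes $\ast_k$ and $\ast_k$ precedes $B_i\cap X_k$. This event has probability $a_{i,k}!\,b_{i,k}!/(a_{i,k}+b_{i,k}+1)!$. If you count good configurations exactly, rather than ``at least one threshold'', your Steps~1--3 give
\[
\sum_i\prod_k\bigl[\tbinom{a_{i,k}+b_{i,k}}{a_{i,k}}(1+a_{i,k}+b_{i,k})\bigr]^{-1}\le 1 .
\]
This is precisely Corollary~\ref{cor1} for sets, obtained with no appeal to Theorem~\ref{main}.
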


Motivated by the above results, we in this paper consider their natural analogues in the vector space setting.
 \begin{definition}
     Let $V$ be  an $n$-dimension real vector space,  and $\mathcal{P}=\{(A_i,B_i):i\in[m]\}$ be a collection of pairs of subspaces of $V$. Then $\mathcal{P}$ is called a  \textit{skew Bollob\'as system} if
     \begin{itemize}
    \item[(i)] for any $\ i\in[m]$, $\dim(A_i\cap B_i)=0$; and

     \item[(ii)]   for any $\ i<j\in[m]$, $\dim(A_i\cap B_j)>0$.
     \end{itemize}
 \end{definition}

In 2021, Scott and Wilmer \cite{S21} first obtained a weighted skew  Bollob\'{a}s-type inequality in the subspace setting. Using the exterior algebra method, they proved that  under the additional monotonicity condition $a_1\leq a_2\leq\cdots \leq a_m$ and $b_1\geq b_2\geq\cdots \geq b_m$, where $\dim (A_i)=a_i$ and $\dim (B_i)=b_i$, the following inequality holds:
$$
\sum_{i=1}^m \frac{1}{
\binom{a_i + b_i}{a_i}} \leq 1 .
$$
Recently, Wu, Li, Lu and Feng \cite{W26} further generalized this result to  a  subspace analogue of $r$-partitions. Their generalization also yields a subspace extension of Alon's theorem  \cite{A851}.
Using the weight invariance argument, they also  gave a subspace extension of Yue's inequality,
showing that, for any skew Bollob\'as system  $\mathcal{P}=\{(A_i,B_i):i\in[m]\}$  of subspaces of $V$ with $\dim (A_i)=a_i$ and $\dim (B_i)=b_i$, one has
$$
\sum_{i=1}^m \frac{1}{\bigl(a_i +b_i+1\bigr)
\binom{a_i + b_i}{a_i}} \leq 1 .
$$

In this paper, we continue this line of investigation by establishing a subspace extension of Yue's second result (Theorem \ref{Y26}), which concerns skew Bollob\'as  systems with respect to a fixed direct sum decomposition.

 Our first  result is:

\begin{theorem}\label{main}
   Let $V$ be an $n$ dimensional real vector space with a direct sum decomposition $V = V_1 \oplus \cdots \oplus V_r$. Suppose that $\mathcal{P}=\{(A_i,B_i):i\in[m]\}$ is a skew Bollob\'{a}s  system of subspaces of $V$ satisfying $
A_i = \bigoplus_{k=1}^r (A_i \cap V_k)$ and $ B_i = \bigoplus_{k=1}^r (B_i \cap V_k)$ for each $i\in [m]$.  Set $a_{i,k}=\dim({A_i\cap V_k})$ and $b_{i,k}=\dim({B_i\cap V_k})$ for $k\in[r]$.
Then we have
    $$
    \sum_{i=1}^{m}\left [\prod_{k=1}^{r}\binom{a_{i,k}+b_{i,k}}{a_{i,k}}(1+a_{i,k}+b_{i,k})\right ]^{-1}
    \le 1.$$
\end{theorem}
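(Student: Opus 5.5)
We will follow the weight invariance method of Wu, Li, Lu and Feng, running it on each summand $V_k$ separately and multiplying the resulting weights. The plan has three steps. First, reduce to a finite, set-like structure by placing the subspaces in general position inside each block. Second, prove a per-block version of the one-part Yue inequality. Third, multiply these per-block statements, which works because the decomposition $A_i=\bigoplus_k(A_i\cap V_k)$ makes the relevant events factor across blocks.

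\textbf{Step 1: a random flag in each block.} For each $k$ with $n_k=\dim V_k$, choose a generic complete flag
\[
0=F^{(k)}_0\subset F^{(k)}_1\subset\cdots\subset F^{(k)}_{n_k}=V_k.
\]
Concretely, take a generic ordered basis $v^{(k)}_1,\dots,v^{(k)}_{n_k}$ of $V_k$, drawn from a continuous distribution invariant under $GL(V_k)$. Equivalently, think of a generic linear functional filtration. The subspace analogue of a uniform random permutation is the family of "Schubert positions" of $A\cap V_k$ relative to the flag. These are the jump indices $j$ at which $\dim(A\cap F_j)$ increases.

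For a pair $(A,B)$ with $A\cap B=0$ inside $V_k$, with $\dim A=a$ and $\dim B=b$, define the event $E_k(A,B)$ by a "separation" condition. The condition is that there is a threshold $t$ for which
\[
\dim(A\cap F_t)=a \quad\text{and}\quad \dim(B\cap W_t)=b,
\]
where $W_t$ is a complementary flag piece of codimension $t$ built from a second, independent generic flag. This is the linear-algebra version of "all of $A$ precedes all of $B$" in a random ordering of $[n]$. The weight of the event should be $\bigl[\binom{a+b}{a}(1+a+b)\bigr]^{-1}$.

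The factor $(1+a+b)^{-1}$ comes from Yue's trick: a uniformly random extra "cut point" is inserted among the $a+b$ relevant positions and must land in a prescribed slot. In the vector space version we will use the invariance argument of \cite{W26}. The weight of the event depends only on $(a,b)$ and not on $A,B$ or the ambient space, because $GL$ acts transitively on pairs of complementary subspaces of given dimensions. So the weight can be computed in a model case, namely coordinate subspaces, where the event becomes the set-system event with probability $\bigl[\binom{a+b}{a}(1+a+b)\bigr]^{-1}$.

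\textbf{Step 2: product structure and disjointness.} Take independent generic flags in the different $V_k$. Because $A_i$ and $B_i$ split along the decomposition, the event $E(i)=\bigcap_k E_k(A_i\cap V_k,B_i\cap V_k)$ has measure equal to the product of the block measures. By Step 1 this product is exactly the $i$-th summand in Theorem \ref{main}.

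It remains to show that the events $E(i)$ are pairwise disjoint, which gives total measure at most $1$. For this we need a canonical choice of a threshold vector $(t_1,\dots,t_r)$ together with an "extra cut" in each block. Then, if both $E(i)$ and $E(j)$ occur with $i<j$, we want to produce a nonzero vector in $A_i\cap B_j$ that is nonetheless separated, a contradiction. Suppose the cuts are read in lexicographic form, as in Yue's argument: each block contributes a cut, and the pair is declared "captured" by block-wise cut positions. If $E(i)$ and $E(j)$ hold at the same cut data, then within every block $A_i\cap V_k\subseteq F_{t_k}$ and $B_j\cap V_k\subseteq W_{t_k}$. Since $F_{t_k}\cap W_{t_k}=0$ generically, we get $A_i\cap B_j=\bigoplus_k (A_i\cap B_j\cap V_k)=0$. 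Here we crucially use that $A_i\cap B_j$ splits. That actually needs care: the splitting of $A_i$ and $B_j$ alone does not force their intersection to split. However, $A_i\subseteq \bigoplus_k F_{t_k}$ and $B_j\subseteq\bigoplus_k W_{t_k}$, and these two sums meet trivially, so the contradiction follows without needing the intersection to split.

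\textbf{Main obstacle.} The hard part is making the cut data canonical, so that each configuration is assigned to at most one $i$ while only the skew condition ($i<j$) is available. In the set case Yue handles this by taking, for each configuration, the first index $i$ whose event holds, and the extra $(1+a+b)^{-1}$ factor pays for the choice of cut. We will mimic this by using the extra uniformly random cut in each block to specify the thresholds $t_k$. The $t_k$ are then functions of the random data alone, not of $i$. With the thresholds fixed by the data, disjointness of the events $E(i)$ for $i<j$ follows from the argument above, and the inequality is obtained by summing the measures. We expect the delicate point to be verifying that the weight invariance computation of \cite{W26} still yields exactly $\bigl[\binom{a+b}{a}(1+a+b)\bigr]^{-1}$ when the cut is taken data-dependently.
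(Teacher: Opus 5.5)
Your Step 1 does not work, and everything after it rests on it. Take a $GL(V_k)$-invariant (generic) flag and fix an $a$-dimensional $A\subseteq V_k$. Almost surely $\dim(A\cap F_t)=\max(0,a+t-n_k)$. So $\dim(A\cap F_t)=a$ forces $t=n_k$ when $a>0$, and then $\dim(B\cap W_{n_k})=b$ forces $b=0$. The more natural event ``$A\subseteq F_t$ and $B\cap F_t=0$'' fails in the same way. Your events therefore have probability $0$ whenever $a,b>0$, not $\bigl[\binom{a+b}{a}(1+a+b)\bigr]^{-1}$.

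The error is in the ``model case''. Transitivity of $GL$ only shows that the probability depends on $(a,b,n_k)$. A $GL$-invariant flag is in general position with respect to coordinate subspaces too, so it never becomes a uniformly random ordering of coordinates. If you instead use a coordinate-aligned flag (a random ordering of a fixed basis), the events have positive probability only for subspaces spanned by basis vectors. The $A_i,B_i$ of a general skew system are not simultaneously of that form. Over a finite field you would get $q$-binomial, $n$-dependent weights, which are strictly smaller than the required ones.

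Your Steps 1--2 do give a complete proof of the set version, Corollary \ref{cor1}, if you use a random ordering of $X_k\cup\{\ast_k\}$ in each block and the event ``$A_i\cap X_k$ precedes $\ast_k$, which precedes $B_i\cap X_k$''. The single extra element $\ast_k$ is the data-independent cut you were looking for, and your disjointness argument applies verbatim. But no permutation-type argument supplies the linear-algebraic content needed for subspaces. Your last paragraph also concedes that the crucial weight computation is unverified.

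The missing idea is to drop the probability space and make the weight itself invariant under a splitting move. If $x\in V_k\setminus\bigl((A_i\cap V_k)\oplus(B_i\cap V_k)\bigr)$, replace $(A_i,B_i)$ in position $i$ by $(A_i\oplus\langle x\rangle,B_i)$ followed by $(A_i,B_i\oplus\langle x\rangle)$. The block splitting survives, each new pair still meets trivially since $x\notin A_i\oplus B_i$, and the two new pairs meet through $x$, so the skew condition holds. The identity
\[
\frac{1}{\binom{s+1}{a+1}(s+2)}+\frac{1}{\binom{s+1}{a}(s+2)}=\frac{1}{\binom{s}{a}(s+1)}
\]
shows the weight is unchanged. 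A potential that strictly increases under each move and is bounded via $m\le 2^n$ forces termination. So you may assume every pair is full, that is, $a_{i,k}+b_{i,k}=n_k$.

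Now group by the profile $\mathbf a=(a_{i,1},\dots,a_{i,r})$. Each class is a uniform skew system, so Theorem \ref{adt1} (where the exterior-algebra input lives) bounds its size by $\prod_k\binom{n_k}{a_k}$. There are at most $\prod_k(1+n_k)$ profiles, so the factor $\prod_k(1+n_k)^{-1}$ in the weight cancels and gives the bound $1$.
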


Theorem \ref{main} indeed implies the set version in the following way. Let $\mathcal{P} = \{(A_i, B_i) : i \in [m]\}$ be a skew Bollob\'as system of subsets of $[n]$, and let $[n] = X_1 \cup \cdots \cup X_r$ be a partition into pairwise disjoint subsets. Consider the standard basis $\{e_1,\ldots,e_n\}$ of $\mathbb{R}^n$ and define $V_k = \operatorname{span}\{e_p : p \in X_k\}$ for each $k\in[r]$, so that $\mathbb{R}^n = \bigoplus_{k=1}^r V_k$. For each $i\in[m]$, set $A_i' = \operatorname{span}\{e_p : p \in A_i\}$ and $B_i' = \operatorname{span}\{e_p : p \in B_i\}$. Then $\mathcal{P}' = \{(A_i', B_i') : i \in [m]\}$ is a skew Bollob\'as system of subspaces of $\mathbb{R}^n$. For $i\in[m]$ and $k\in[r]$, we have $\dim(A_i' \cap V_k) = |A_i \cap X_k|$ and $\dim(B_i' \cap V_k) = |B_i \cap X_k|$, and clearly $A_i' = \bigoplus_{k=1}^r (A_i' \cap V_k)$ and $B_i' = \bigoplus_{k=1}^r (B_i' \cap V_k)$. Applying Theorem \ref{main} to $\mathcal{P}'$ yields the following corollary, which strengthens Theorem \ref{Y26}.

\begin{corollary}\label{cor1}
Suppose that $[n]$ is the disjoint union of the subsets $X_1,\ldots,X_r$.  Let $\mathcal{P}=\{(A_i,B_i):i\in[m]\}$ be a skew Bollob\'{a}s  system  of subsets of $[n]$ with $a_{i,k}=|A_i\cap X_k|$ and $b_{i,k}=|B_i\cap X_k|$.  Then
    $$
     \sum_{i=1}^{m}\left [\prod_{k=1}^{r}\binom{a_{i,k}+b_{i,k}}{a_{i,k}}(1+a_{i,k}+b_{i,k})\right ]^{-1}
    \le 1.
    $$
\end{corollary}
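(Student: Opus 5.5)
The statement immediately above is Corollary \ref{cor1}. It follows directly from Theorem \ref{main} through the embedding of subsets into coordinate subspaces already sketched in the preceding paragraph, so the plan is to make that reduction rigorous.

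Fix the standard basis $e_1,\ldots,e_n$ of $\mathbb{R}^n$. For $S\subseteq[n]$ write $E_S=\operatorname{span}\{e_p:p\in S\}$. Put $V_k=E_{X_k}$, $A_i'=E_{A_i}$ and $B_i'=E_{B_i}$.

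The one fact needed is that for all $S,T\subseteq[n]$ we have $E_S\cap E_T=E_{S\cap T}$. The inclusion $\supseteq$ is trivial. For $\subseteq$, take a vector $v$ in both subspaces and expand it in the basis: its coordinate on $e_p$ must vanish unless $p\in S$, and must also vanish unless $p\in T$, so $v\in E_{S\cap T}$. Consequently $\dim(E_S\cap E_T)=|S\cap T|$.

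Now check the hypotheses of Theorem \ref{main}.

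First, the $V_k$ give a direct sum decomposition $\mathbb{R}^n=V_1\oplus\cdots\oplus V_r$, because the $X_k$ partition $[n]$ and so partition the basis.

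Second, $\mathcal{P}'=\{(A_i',B_i'):i\in[m]\}$ is a skew Bollob\'as system of subspaces. By the fact above, $\dim(A_i'\cap B_i')=|A_i\cap B_i|=0$ for every $i$. For $i<j$, $\dim(A_i'\cap B_j')=|A_i\cap B_j|>0$.

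Third, $A_i'$ splits along the decomposition. Since $A_i=\bigsqcup_k (A_i\cap X_k)$ and $E_{A_i}\cap E_{X_k}=E_{A_i\cap X_k}$, we get
\[
A_i'=\bigoplus_{k=1}^r E_{A_i\cap X_k}=\bigoplus_{k=1}^r (A_i'\cap V_k).
\]
The same argument gives the splitting for $B_i'$.

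Finally, the parameters agree: $\dim(A_i'\cap V_k)=|A_i\cap X_k|=a_{i,k}$ and $\dim(B_i'\cap V_k)=|B_i\cap X_k|=b_{i,k}$. Applying Theorem \ref{main} to $\mathcal{P}'$ therefore yields exactly the claimed inequality.

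There is no real obstacle here; the only step needing care is the intersection identity $E_S\cap E_T=E_{S\cap T}$, and the basis-expansion argument above settles it.
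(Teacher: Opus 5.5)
Your proposal is correct and follows the paper's route: embed the subsets as the coordinate subspaces $E_S$, check that $\mathcal{P}'$ is a skew Bollob\'as system compatible with $\mathbb{R}^n=V_1\oplus\cdots\oplus V_r$ with matching dimensions, and apply Theorem \ref{main}. Your explicit proof of $E_S\cap E_T=E_{S\cap T}$ just supplies the detail the paper leaves as ``clearly.''
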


Apart from the skew setting, there is another natural relaxation of the Bollob\'as condition, introduced by Tuza \cite{T87,T89}. Instead of requiring a single pair $(A_i,B_i)$ with $A_i\cap B_i=\emptyset$ and $A_i\cap B_j\neq\emptyset$ for $i<j$, Tuza considered families of $d$-tuples, and the nonempty intersection condition is required only for some ordered pair of components.

\begin{definition}\label{de7}
    Let $\mathcal{P}=\{(A_i^{(1)},\ldots,A_i^{(d)}):i\in[m]\}$ be
    a collection of $d$-tuples of subsets of $[n]$.
     Then $\mathcal{P}$ is called a \textit{weak Bollob\'as system} if the following hold:
\begin{itemize}
    \item[(i)] for any $~ i\in [m]$, the subsets $A_i^{(1)}, \ldots, A_i^{(d)} $ are pairwise disjoint.
     \item[(ii)] for any $\ i<j\in[m]$,  $\exists\ p<q\in[d]$ such that
     $
  A_i^{(p)}\cap A_j^{(q)} \ne\emptyset\ \text{or} \ A_i^{(q)}\cap A_j^{(p)}\ne\emptyset.
    $
\end{itemize}
\end{definition}

Every skew Bollob\'{a}s system is weak, but the converse does not hold.

For such systems,  using a probabilistic argument based on random partitions of the ground set, Tuza \cite{ T89} obtained the following  result.

\begin{theorem}[Tuza \cite{T89}]\label{T89}
    Let $p_1,\ldots,p_d$ be arbitrary positive real numbers such that $p_1+\ldots +p_d=1$. Suppose that $\mathcal{P}=\{(A_i^{(1)},\ldots,A_i^{(d)}):i\in[m]\}$ is a weak Bollob\'{a}s system of $d$-tuples of subsets of $[n]$. Then
    $$
    \sum_{i=1}^{m}\prod_{\ell=1}^{d} p_r^{|A_i^{(\ell)}|}\le 1.
    $$
\end{theorem}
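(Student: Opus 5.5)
We prove Theorem \ref{T89} with the standard random partition argument, using only condition (ii) of Definition \ref{de7} in its weak form. Independently for each element $x\in[n]$, choose a colour $c(x)\in[d]$ with $\Pr[c(x)=\ell]=p_\ell$. For each $i\in[m]$ let $E_i$ be the event that every element of $A_i^{(\ell)}$ receives colour $\ell$, for every $\ell\in[d]$. The sets $A_i^{(1)},\ldots,A_i^{(d)}$ are pairwise disjoint by (i), so the colour requirements in $E_i$ involve distinct elements and are consistent. By independence,
$$\Pr[E_i]=\prod_{\ell=1}^{d}p_\ell^{|A_i^{(\ell)}|}.$$

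Next we show that the events $E_i$ are pairwise disjoint. Take $i<j$ and suppose both $E_i$ and $E_j$ occur. By (ii) there are $p<q$ and an element $x$ with $x\in A_i^{(p)}\cap A_j^{(q)}$ or $x\in A_i^{(q)}\cap A_j^{(p)}$.
\begin{itemize}
\item[(a)] If $x\in A_i^{(p)}\cap A_j^{(q)}$, then $E_i$ forces $c(x)=p$ and $E_j$ forces $c(x)=q$.
\item[(b)] If $x\in A_i^{(q)}\cap A_j^{(p)}$, then $E_i$ forces $c(x)=q$ and $E_j$ forces $c(x)=p$.
\end{itemize}
In both cases $p\neq q$ gives a contradiction. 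Hence the events are mutually exclusive, and
$$\sum_{i=1}^{m}\prod_{\ell=1}^{d}p_\ell^{|A_i^{(\ell)}|}=\sum_{i=1}^{m}\Pr[E_i]=\Pr\Bigl[\bigcup_{i=1}^m E_i\Bigr]\le 1.$$

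The argument has no real obstacle. The only point to note is that disjointness of the $E_i$ needs just one "crossing" pair for each $i\neq j$, in either order. This is why the weak condition (ii) suffices, and why the symmetric disjunction in Definition \ref{de7} is harmless.

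For a proof that carries over to the subspace setting, one would replace the random colouring by a weight argument. The natural choice is a random flag, or a generic projection of $V$ onto a sum of $d$ subspaces, that realises the "colour classes". This is the step I expect to be the main obstacle there. The plan would be to assign to each $i$ a probability measure with total mass $\prod_\ell p_\ell^{a_i^{(\ell)}}$, and then show disjointness via the intersection conditions using dimension counting in place of element membership.
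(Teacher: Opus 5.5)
Your random-colouring argument is correct and complete; it is essentially Tuza's original proof. Condition (i) gives $\Pr[E_i]=\prod_\ell p_\ell^{|A_i^{(\ell)}|}$. The crossing condition, read in either order, forces some element to receive two different colours, so the $E_i$ are pairwise disjoint.

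The paper deliberately takes a different, deterministic route. Whenever some $x\in[n]$ is missed by a tuple, it replaces that tuple by the $d$ tuples obtained by inserting $x$ into one component. Because $p_1+\cdots+p_d=1$, the weight is unchanged, and the family stays a weak system. The potential $\sum_i\sum_\ell|A_i^{(\ell)}|$ strictly increases and stays below $n(d+1)^n$, so the process ends with every tuple an ordered partition of $[n]$. The paper then bounds the number of tuples with size profile $(a_1,\ldots,a_d)$ by $\binom{n}{a_1,\ldots,a_d}$ and finishes with the multinomial theorem.

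The two proofs are really the same computation seen from two sides. Iterating the splitting replaces tuple $i$ by exactly the colourings in your event $E_i$, each carrying weight equal to its probability. The distinctness of the final full tuples plays the role of your disjointness of the $E_i$, and $(p_1+\cdots+p_d)^n=1$ is the total probability.

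Your version is shorter and needs no termination argument or a priori bound on $m$. The paper's version isolates the single place where the ground set enters, namely the count of full tuples with a fixed profile. That is what transfers to subspaces: Lemma~\ref{Y262}(i) replaces the trivial count. This is also how the paper avoids the random flag or generic projection you expect to be the obstacle in your final paragraph, since no random object is used there at all. The price is that it needs the skew condition rather than the weak one, because no analogue of Lemma~\ref{Y262}(i) is available for weak subspace systems.
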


For the special case $d=2$, we may write $A_i=A_i^{(1)}$, $B_i=A_i^{(2)}$, then $\mathcal{P}=\{(A_i,B_i):i\in[m]\}$ is a weak Bollob\'as system. If $|A_i|=a$ and $|B_i|=b$ for all $i$, in view of Theorem \ref{T89}, Tuza \cite{T87} derived the
%uniform
bound $m\leq \frac{(a+b)^{a+b}}{a^ab^b}$.
Theorem \ref{T89} also has many applications in other various combinatorial problems, including bounds for graph coloring \cite{K87, T75} and hypergraph covering numbers \cite{T}.

  In this paper, we give a new combinatorial proof for Theorem \ref{T89}.
Motivated by this new proof, we consider its natural extension of the vector space setting. We first introduce the appropriate definitions of $d$-tuples for subspace.

\begin{definition}
    Let  $V$ be an $n$ dimensional real vector space, and let $\mathcal{P}=\{(A_i^{(1)},\ldots,A_i^{(d)}):i\in[m]\}$ be a collection of $d$-tuples of subspaces of $V$. Then
   \begin{itemize}
       \item [(1)] $\mathcal{P}$ is called a skew Bollob\'{a}s  system if
       \begin{itemize}
        \item [(i)]   for any $\ i \in [m], \ \dim{(A_i^{(1)}+\cdots+A_i^{(d)})}=\dim{(A_i^{(1)})}+\cdots+\dim{(A_i^{(d)})}$; and
         \item [(ii)] for
        any $\ i< j\in[m],\ \exists\ p<q\in[d]$ such that $\dim{(A_i^{(p)}\cap A_j^{(q)})}>0$.
   \end{itemize}
       \item[(2)] $\mathcal{P}$ is called a weak Bollob\'{a}s  system if
         \begin{itemize}
        \item [(i)] for any $\ i \in [m], \ \dim{(A_i^{(1)}+\cdots+A_i^{(d)})}=\dim{(A_i^{(1)})}+\cdots+\dim{(A_i^{(d)})}$; and
        \item [(ii)] for
        any $\ i< j\in[m],\ \exists\ p<q\in[d]$ such that $\dim{(A_i^{(p)}\cap A_j^{(q)})}>0$ or $\dim{(A_i^{(q)}\cap A_j^{(p)})}>0$.
      \end{itemize}
      \end{itemize}

\end{definition}

The weak condition is the subspace analogue of Tuza's original definition for sets. However, a direct extension of Theorem \ref{T89} to weak Bollob\'{a}s  systems of subspaces encounters an obstacle: a uniform bound analogous to Lemma \ref{Y262} (i) is not available for weak systems. This limitation  leads us to consider the stronger skew  Bollob\'{a}s
condition.

Our second main result is a subspace analogue of Tuza's theorem for skew Bollob\'{a}s  system.

\begin{theorem}\label{main2}
 Let  $V$ be an $n$ dimensional real vector space, and let  $p_1,\ldots,p_d$ be arbitrary positive real numbers with $p_1+\ldots+p_d=1$.
    Suppose that $\mathcal{P}=\{(A_i^{(1)},\ldots,A_i^{(d)}):i\in[m]\}$ is a skew Bollob\'{a}s  system of subspaces of $V$. Let  $a_{i}^{(\ell)}=\dim({A_i^{(\ell)}})$ for $\ell\in[d]$ and $i\in[m]$. Then
    $$
\sum_{i=1}^{m}\prod_{\ell=1}^{d}p_{\ell}^{a_i^{(\ell)}}
    \le 1.$$
\end{theorem}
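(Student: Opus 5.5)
We plan a weight-invariance argument in the style of \cite{W26}, adapted from pairs to $d$-tuples.

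\textbf{Step 1: rational weights and generic position.} By continuity and density it suffices to treat rational $p_\ell$. Write $p_\ell = N_\ell/N$ with positive integers $N_\ell$ and $N=\sum_\ell N_\ell$. We also apply a generic small perturbation, or pass to a large field extension, so that the subspaces we construct below are in general position with respect to all $A_i^{(\ell)}$ and all sums of them.

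\textbf{Step 2: the random-flag model.} This replaces Tuza's random colouring of $[n]$. We choose subspaces $W_1,\ldots,W_d$ of $V$ of codimension profile dictated by the weights, namely $\dim W_\ell = n N_\ell/N$ after scaling $V$ by taking $V\otimes\mathbb{R}^N$. The weight of a configuration is then the probability that each $A_i^{(\ell)}$ is ``captured'' by $W_\ell$. Concretely, define the event $E_i$: $A_i^{(\ell)}\subseteq W_1+\cdots+W_\ell$ modulo $W_1+\cdots+W_{\ell-1}$ in an appropriate flag sense. The quantity playing the role of $\prod_\ell p_\ell^{a_i^{(\ell)}}$ is $\prod_\ell \bigl(\dim W_\ell/n\bigr)^{a_i^{(\ell)}}$, obtained by integrating over the Grassmannian, or via a Haar measure of a random decomposition $V=W_1\oplus\cdots\oplus W_d$.

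\textbf{Step 3: disjointness of the events.} We show that the skew condition forbids $E_i$ and $E_j$ from holding simultaneously for $i<j$. If $x\in A_i^{(p)}\cap A_j^{(q)}$ is nonzero with $p<q$, then $x$ must lie both in the $p$-th and $q$-th pieces of the decomposition, which contradicts directness. This is exactly where skewness rather than weakness is used: the ordered condition $p<q$ matches the order in the flag. It is the analogue of the ``uniform bound'' of Lemma~\ref{Y262}(i) mentioned before the theorem. Summing probabilities of the disjoint events then gives $\sum_i \Pr(E_i)\le 1$.

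\textbf{Main obstacle.} The main obstacle is Step 2, the \emph{weight invariance}: we must show that $\Pr(E_i)$, or rather its limiting value as $N\to\infty$, equals $\prod_\ell p_\ell^{a_i^{(\ell)}}$ independently of how the subspaces sit inside $V$. It is not enough to take $A_i^{(1)}\oplus\cdots\oplus A_i^{(d)}$ as a coordinate configuration; the value must be independent of the actual positions. We would first try to reduce, using condition (i) and a $GL(V)$ change of coordinates, to the case where the $A_i^{(\ell)}$ are spanned by disjoint sets of standard basis vectors. After that reduction, the set version of the count (Tuza's colouring) gives the product formula. Invariance under $GL(V)$ of the random-decomposition distribution justifies the reduction.

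If a Haar-measure model proves awkward, we would use the discrete alternative. In this version we count, over a finite field $\mathbb{F}_q$, the decompositions compatible with each tuple, and let $q\to\infty$ together with $N$. The skew argument of Step 3 then transfers verbatim, with the real case recovered by a standard specialization of generic position.
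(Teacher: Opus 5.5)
There is a genuine gap, and it is Step 2. You flag it as the main obstacle, but it cannot be repaired the way you suggest.

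\textbf{The probability model does not exist.} Over $\mathbb{R}$ there is no $\mathrm{GL}(V)$-invariant probability measure on a Grassmannian at all, since $\mathrm{GL}(V)$ is not compact. Under the natural $O(n)$-invariant measure, the proper subspaces containing a fixed nonzero subspace form a subvariety of positive codimension, hence a null set. So every $\Pr(E_i)$ with a nonzero component equals $0$.

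Over $\mathbb{F}_q$ a uniform $\mathrm{GL}$-invariant model does exist. There a random $w$-dimensional subspace contains a fixed $a$-dimensional one with probability $\binom{w}{a}_q/\binom{n}{a}_q\approx q^{-a(n-w)}$. This tends to $0$ as $q\to\infty$ and is unrelated to $\prod_\ell p_\ell^{a_\ell}$. Already for $d=2$ and full pairs with $n=a+b$, your scheme only yields $m\le\binom{a+b}{a}_q$.

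\textbf{The coordinate reduction mixes distributions.} You propose moving a tuple to coordinate position and then using Tuza's colouring. But colouring a fixed basis is invariant only under monomial matrices, and different tuples need different bases. Step 3 requires all the $E_i$ to live on one common probability space. No single random object gives every tuple probability $\prod_\ell p_\ell^{a_i^{(\ell)}}$. Separately, a generic perturbation of the $A_i^{(\ell)}$ themselves, as in Step 1, would destroy the nontrivial intersections that define the system.

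\textbf{Step 3 is misdiagnosed.} Your disjointness argument only uses that a nonzero $x$ would lie in two different pieces. In the flag version, $x$ would lie in $F_p\subseteq F_{q-1}$ and in a subspace meeting $F_{q-1}$ trivially. The argument works verbatim when $x\in A_i^{(q)}\cap A_j^{(p)}$ and never uses $i<j$. So if Step 2 worked, you would also have settled the weak-system problem that the paper leaves open. Disjointness is the easy part. The real linear-algebraic input, and the only place the ordered skew condition enters, is Lemma~\ref{Y262}(i): $m\le\binom{a_1+\cdots+a_d}{a_1,\ldots,a_d}$ for skew systems with a fixed dimension vector, proved by exterior-algebra/general-position methods. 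Nothing in your proposal plays that role.

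\textbf{The paper's route avoids probability entirely.} Suppose $A_i^{(1)}\oplus\cdots\oplus A_i^{(d)}\neq V$.
\begin{enumerate}
\item Pick $x$ outside this sum. Replace the $i$-th tuple by the $d$ tuples obtained by adjoining $x$ to the $\ell$-th component, $\ell=1,\ldots,d$, listed in this order. Then $x$ witnesses the skew condition among the new tuples, and the old intersections handle all other indices.
\item The weight changes by $\prod_\ell p_\ell^{a_i^{(\ell)}}(p_1+\cdots+p_d-1)=0$.
\item Since $m\le d^n$ by Lemma~\ref{Y262}(ii), the total-dimension potential is bounded, so the process terminates with $A_i^{(1)}\oplus\cdots\oplus A_i^{(d)}=V$ for every $i$.
\item Group the tuples by dimension vector $\mathbf{a}$ with $a_1+\cdots+a_d=n$. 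Lemma~\ref{Y262}(i) gives $|I_{\mathbf{a}}|\le\binom{n}{a_1,\ldots,a_d}$.
\item The multinomial theorem then gives $\omega\le(p_1+\cdots+p_d)^n=1$.
\end{enumerate}
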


%As an application, we show that Theorem \ref{main2} implies Theorem \ref{T89}. Let $\mathcal{P}=\{(A_i^{(1)},\ldots,A_i^{(d)}):i\in[m]\}$ be a weak Bollob\'{a}s system of $d$-tuples of subsets of $[n]$. By definition \ref{de7}, each $d$-tuple occurs at most once in $\mathcal{P}$, and since there are $(d+1)^n$ possible $d$-tuples of pairwise disjoint subsets of $[n]$, we have $m\leq (d+1)^n$. Now consider the standard basis $\{e_1,\ldots,e_n\}$ of $\mathbb{R}^n$. For each $i\in[m]$ and $r\in[d]$, define $B_i^{(r)} = \operatorname{span}\{e_p : p \in A_i^{(r)}\}\subseteq\mathbb{R}^n$. Then $\mathcal{P}' = \{(B_i^{(1)},\ldots,B_i^{(d)}):i\in[m]\}$ forms a weak Bollob\'{a}s system of subspaces of $\mathbb{R}^n$ satisfying the conditions of Theorem \ref{main2}. Applying Theorem \ref{main2} to $\mathcal{P}'$ yields Theorem \ref{T89}.

The proofs of Theorems \ref{main}, \ref{T89} and  \ref{main2} rely on the so called weight-invariance technique.
This method is originated from  \cite{H24},  and subsequently refined in \cite{W26}, to deal with  Bollob\'as-type inequalities as well as Tuza-type theorems.
For more related results on Bollob\'as-type theorems and their variations, we refer the reader to \cite{H15, H21, K841, K21, Y22, Y262} and the references therein.

%This paper is organized as follows.

\section{Proofs of Theorems \ref{main},  \ref{T89} and  \ref{main2} }

\subsection{Proof of Theorem \ref{main}}

Our proof relies crucially on a recent result established by Wu et al. \cite{W26}, which provides a general bound for skew Bollob\'as systems of subspaces under direct sum decompositions.

\begin{theorem}[\cite{W26}]\label{adt1}
 Let $V$ be an $n$ dimensional real vector space with a direct sum decomposition $V = V_1 \oplus \cdots \oplus V_r$. Let   $\mathcal{P}=\{(A_i,B_i):i\in[m]\}$ be a skew Bollob\'as system of subspaces of $V$ satisfying $
A_i = \bigoplus_{k=1}^r (A_i \cap V_k)$ and $ B_i = \bigoplus_{k=1}^r (B_i \cap V_k)$ for each $i\in [m]$. Suppose that
$ \dim (A_i \cap V_k)=a_k$ and $ \dim (B_i \cap V_k)=b_k$ for every $i\in [m]$ and $k\in [r]$.
Then
$$m\leq \prod_{k=1}^r {a_k+b_k \choose a_k}.$$
\end{theorem}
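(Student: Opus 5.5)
This theorem (Theorem~\ref{adt1}) is taken from Wu, Li, Lu and Feng~\cite{W26}; I would prove it with the exterior algebra method of Lov\'asz, in the form Scott and Wilmer~\cite{S21} used for subspaces, adapted to the direct sum. First, extend scalars if necessary and choose, for each $k\in[r]$, a generic linear map $\phi_k\colon V_k\to W_k$ with $\dim W_k=a_k+b_k$. Genericity means: for every $i$ and $k$, the restriction of $\phi_k$ to $(A_i\cap V_k)+(B_i\cap V_k)$ is injective. This subspace has dimension at most $a_k+b_k$, and finitely many generic conditions can be met simultaneously over $\mathbb{R}$. Set $\phi=\phi_1\oplus\cdots\oplus\phi_r$, and put $A_i'=\phi(A_i)$ and $B_i'=\phi(B_i)$. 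Because $A_i$ and $B_i$ split along the decomposition, their images still split as $A_i'=\bigoplus_k \phi_k(A_i\cap V_k)$, and similarly for $B_i'$, with the same dimensions $a_k$ and $b_k$.

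\textbf{Key step: the two intersection conditions survive.}
\begin{itemize}
\item[(i)] $A_i'\cap B_i'=0$. The intersection can be checked componentwise, since both subspaces split along the $W_k$. In each component it holds by injectivity of $\phi_k$ on $(A_i\cap V_k)+(B_i\cap V_k)$, which is a direct sum.
\item[(ii)] For $i<j$, $A_i'\cap B_j'\neq0$. We have $A_i\cap B_j\neq 0$, and this intersection also splits, as $\bigoplus_k (A_i\cap B_j\cap V_k)$. Hence some $k$ has $A_i\cap B_j\cap V_k\neq0$, and its image under $\phi_k$ is nonzero lying in $A_i'\cap B_j'$.
\end{itemize}
Strictly, (ii) does not need genericity at all: the image of a nonzero intersection can only vanish if $\phi_k$ kills it, which injectivity on $A_i\cap V_k$ prevents. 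So after this step we may assume $\dim V_k=a_k+b_k$ for every $k$.

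\textbf{Wedge-product argument.} For each $i$, define
\[
\alpha_i=\bigwedge_{k}\omega(A_i\cap V_k)\in\bigotimes_k \Lambda^{a_k}V_k,
\qquad
\beta_i=\bigotimes_k\omega(B_i\cap V_k)\in\bigotimes_k \Lambda^{b_k}V_k,
\]
where $\omega(U)$ denotes the decomposable wedge of a basis of $U$. The natural pairing $\Lambda^{a_k}V_k\times\Lambda^{b_k}V_k\to\Lambda^{a_k+b_k}V_k\cong\mathbb{R}$ is applied factorwise. Then $\langle\alpha_i,\beta_j\rangle$ is the product over $k$ of $\omega(A_i\cap V_k)\wedge\omega(B_j\cap V_k)$. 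This product is nonzero if and only if $(A_i\cap V_k)\cap(B_j\cap V_k)=0$ for every $k$, that is, if and only if $A_i\cap B_j=0$, again using the splitting. So the matrix $(\langle\alpha_i,\beta_j\rangle)_{i,j}$ has nonzero diagonal and vanishes for $i<j$. It is therefore triangular and nonsingular, so the $\alpha_i$ are linearly independent in a space of dimension $\prod_k\binom{a_k+b_k}{a_k}$.

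The main obstacle I expect is the genericity and dimension-reduction step. The same $\phi_k$ must serve all pairs, must preserve $A_i\cap B_i=0$, and must respect the splitting. I would handle this by working one component at a time and using the fact that a generic projection to dimension $a_k+b_k$ is injective on any fixed subspace of dimension at most $a_k+b_k$.
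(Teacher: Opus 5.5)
Your argument is correct, and the paper has no proof to compare it with. It quotes this statement from \cite{W26} and uses it only as a black box, to bound $|I_{\mathbf a}|$ for full pairs in the proof of Theorem~\ref{main}. The introduction does indicate that \cite{W26} extends the Scott--Wilmer exterior algebra method, which is the route you take.

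The individual steps hold up:
\begin{itemize}
\item \emph{Projection.} Injectivity of $\phi_k$ on the $(a_k+b_k)$-dimensional space $(A_i\cap V_k)\oplus(B_i\cap V_k)$ is a nonempty Zariski-open condition. Finitely many such conditions can be met simultaneously over $\mathbb{R}$, so no scalar extension is needed.
\item \emph{Intersection conditions.} Since $A_i$ and $B_j$ both split along the decomposition, so does $A_i\cap B_j$. This gives (i) and (ii) after projection, with all dimensions preserved.
\item \emph{Wedge pairing.} The tensor product of the componentwise pairings $\Lambda^{a_k}W_k\times\Lambda^{b_k}W_k\to\Lambda^{a_k+b_k}W_k\cong\mathbb{R}$ gives a lower triangular matrix with nonzero diagonal. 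Hence $m\le\dim\bigotimes_k\Lambda^{a_k}W_k=\prod_k\binom{a_k+b_k}{a_k}$.
\end{itemize}

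There are two small slips. First, $\alpha_i$ should be $\bigotimes_k\omega(A_i\cap V_k)$ rather than $\bigwedge_k\omega(A_i\cap V_k)$. Alternatively, identify $\bigwedge_k\omega(A_i\cap V_k)=\omega(A_i)$ with its image under $\Lambda(\bigoplus_k V_k)\cong\bigotimes_k\Lambda V_k$. Second, your remark that (ii) ``does not need genericity'' should say that it needs only injectivity of $\phi_k$ on $A_i\cap V_k$. Your genericity condition supplies this.
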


\begin{lemma}[\cite{H24}]\label{thm1}
    Let $\mathcal{P}=\{(A_i,B_i):i\in[m]\}$ be a skew Bollob\'{a}s system of subspaces of $V\cong\mathbb{R}^n$.  Then  $m\leq 2^n$.
\end{lemma}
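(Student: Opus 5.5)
The plan is to split the system according to the dimension of $A_i$, make each class uniform by enlarging the $B_i$, and then apply the uniform bound of Theorem \ref{adt1} with $r=1$. Summing over classes then gives $\sum_a\binom{n}{a}=2^n$.

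\textbf{Splitting.} For $0\le a\le n$, let $I_a=\{i\in[m]:\dim A_i=a\}$. Restricting $\mathcal{P}$ to the indices in $I_a$, in their original order, still gives a skew Bollob\'as system, since both conditions only involve pairs of indices of the system and their relative order.

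\textbf{Padding.} Fix $a$ and $i\in I_a$. Because $A_i\cap B_i=\{0\}$, the sum $A_i+B_i$ is direct, so $\dim B_i\le n-a$. Extend a basis of $A_i\oplus B_i$ to a basis of $V$ and let $B_i'$ be the span of $B_i$ and the added vectors. Then $B_i\subseteq B_i'$, $\dim B_i'=n-a$, and $A_i\cap B_i'=\{0\}$, because $A_i\oplus B_i'=V$ as a direct sum.

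For $i<j$ in $I_a$ we have $A_i\cap B_j'\supseteq A_i\cap B_j\neq\{0\}$, since $B_j\subseteq B_j'$. Hence $\{(A_i,B_i'):i\in I_a\}$ is a skew Bollob\'as system in which every $A_i$ has dimension $a$ and every $B_i'$ has dimension $n-a$.

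\textbf{Uniform bound and summation.} Apply Theorem \ref{adt1} with $r=1$ and $V_1=V$. The direct sum hypotheses are trivially satisfied, so
$$|I_a|\le\binom{a+(n-a)}{a}=\binom{n}{a}.$$
Summing over $a$ gives
$$m=\sum_{a=0}^n|I_a|\le\sum_{a=0}^n\binom{n}{a}=2^n.$$

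The only point needing care is the padding step. Enlarging $B_j$ can only make the intersections $A_i\cap B_j$ larger, so the skew condition is preserved. The disjointness condition survives because each $B_i'$ is chosen as a complement of its own $A_i$.
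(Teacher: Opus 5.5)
Your proof is correct. The paper gives no proof of this lemma; it is simply quoted from Heged\H{u}s and Frankl. Your argument is nevertheless the standard one, and it uses exactly the mechanism of the last step of the proof of Theorem~\ref{main}. There, once every pair is ``full'' (here $A_i\oplus B_i'=V$), the pairs are grouped by their dimension data and the uniform bound of Theorem~\ref{adt1} is applied. For $r=1$ that bound is just Lov\'asz's classical theorem for subspaces, so $|I_a|\le\binom{n}{a}$ and summing over $a$ gives $2^n$.

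The one difference worth noting is your padding step. You enlarge only $B_i$, which leaves $m$ unchanged and preserves the skew condition for free. The paper's weight-preserving operation instead replaces one pair by two, so it needs a potential-function termination argument, and that argument itself relies on Lemma~\ref{thm1}. Because you only need a count rather than an invariant weight, your one-sided padding is simpler and avoids any circularity with how the lemma is used later.
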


\begin{proof}[\bf Proof of Theorem \ref{main}]
Assume $n_k= \dim(V_k)$.
Since $V = V_1 \oplus \cdots \oplus V_r$, we have $n=n_1+\cdots+n_r$.
For each $k\in [r]$, we denote  $$
D_k=\{(a,b)\in \mathbb{Z}\times \mathbb{Z}: 0\leq a,b \leq n_k, a+b\le n_k\}.
$$
%Let  $\mathcal{P}=\{(A_i,B_i):i\in[m]\}$ be a skew Bollob\'{a}s  system of subspaces of $V$ with  $
%A_i = \bigoplus_{k=1}^r (A_i \cap V_k)$ and $ B_i = \bigoplus_{k=1}^r (B_i \cap V_k)$,
%and set  $a_{i,k}=\dim({A_i\cap V_k})$ and $b_{i,k}=\dim({B_i\cap V_k})$.
With the symbols in the assumption, we
define the \textbf{type} of $\mathcal{P}$ as
$$
\tau(\mathcal{P})=\left(m; (a_{1,1},b_{1,1}),\ldots, (a_{1,r},b_{1,r}), \ldots, (a_{m,1},b_{m,1}),\ldots (a_{m,r},b_{m,r})\right),
$$
and define the \textbf{weight} of $\mathcal{P}$  as
$$
\omega(\mathcal{P})= \sum_{i=1}^{m}\left [\prod_{k=1}^{r}\binom{a_{i,k}+b_{i,k}}{a_{i,k}}(1+a_{i,k}+b_{i,k})\right ]^{-1}.
$$
By Lemma \ref{thm1}, we have $m\le2^n$. Since  $(a_{i,k},b_{i,k})\in D_k$, the number of possible types of $\mathcal{P}$ is bounded by $$\sum_{m=0}^{2^n}(| D_1 |\cdots |D_r|) ^m,$$ which is finite.

If there exists some $\mathcal{P}$ having type $\tau$, then we say that $\tau$ is \textbf{realizable}, and we define $\omega(\tau)=\omega(\mathcal{P})$.
Let $T$ denote the set of all realizable types. Since $T$ is finite, the set $\{\omega(\tau): \tau \in T\}$ is a finite set of real numbers.
Therefore we may choose a  realizable type $\tau^*$  such that $\omega(\tau^*)=\text{max}~\omega(\tau)$.
Without loss of generality, we may assume that the skew Bollob\'{a}s  system $\mathcal{P}=\{(A_i,B_i):i\in[m]\}$ of the theorem satisfies  $\omega(\mathcal{P})=\omega(\tau^*)$.

Fix a pair $(A_i,B_i)\in \mathcal{P}$ and  $k\in [r]$. If there exists a vector $ x\in V_k\setminus ((A_i\cap V_k)\oplus (B_i\cap V_k))$,  then we consider two new pairs
$
(A_i\oplus\left \langle x \right \rangle, B_i )$
 and
$ (A_i,B_i\oplus \left \langle x \right \rangle ).
$
By the definition of a skew  Bollob\'{a}s  system,  neither of these pairs belongs to
$
\mathcal{P}.
$
Replacing $(A_i, B_i) \in \mathcal{P}$ with $(A_i\oplus\left \langle x \right \rangle,B_i ),(A_i,B_i\oplus \left \langle x \right \rangle )$, we obtain a new skew  Bollob\'{a}s  system $\mathcal{P}'$.
Let $s_{i,k}=a_{i,k}+b_{i,k}$. A direct computation shows that
$$
\begin{aligned}
& \omega(\mathcal{P}')-\omega(\mathcal{P})\\
=&\frac{1}{\prod_{j\neq k}\binom{s_{i,j}}{a_{i,j}}(1+s_{i,j})}\left(\frac{1}{\binom{s_{i,k}+1}{a_{i,k}+1}(s_{i,k}+2)}
+\frac{1}{\binom{s_{i,k}+1}{a_{i,k}}(s_{i,k}+2)}-\frac{1}{\binom{s_{i,k}}{a_{i,k}}(1+s_{i,k})}\right)\\
=&0.
\end{aligned}
$$
Thus, $\omega(\mathcal{P})=\omega(\mathcal{P}^\prime)$. This weight invariance  allows us to modify a system by such operations without changing its weight.

For a pair $(A_i,B_i)\in \mathcal{P}$ and a subspace $V_k$,  we denote
%define the \textbf{deficiency}
$$
d_{i,k}:=n_k-\dim\big( (A_i\cap V_k)\oplus ( B_i\cap V_k)\big)\ge 0.
$$
A pair $(A_i,B_i)$ is called \textbf{full} if $d_{i,k}=0$ for all $k$, i.e.,
$
(A_i\cap V_k)\oplus(B_i\cap V_k)=V_k
$
for all $k$.
For a full pair $(A_i,B_i)$, we have $a_{i,k}+b_{i,k}=n_k$ for all $k$, and $A_i\oplus B_i=V$.
We define the \textbf{potential} of  $\mathcal{P}=\{(A_i,B_i):i\in[m]\}$ as
$$
\phi(\mathcal{P})=\sum_{i=1}^{m}\prod_{k=1}^{r}2^{n_k-d_{i,k}}.
$$
If $\mathcal{P}$ contains a non-full pair, then there exist $i\in[m]$ and $k\in[r]$ such that $d_{i,k}\ge 1$.
Choose a vector $ x\in V_k\setminus ((A_i\cap V_k)\oplus (B_i\cap V_k))$, we can construct $\mathcal{P}'$ as described above. Then we similarly have $$\omega(\mathcal{P}')=\omega(\mathcal{P}),
$$
 and
$$
\begin{aligned}
\phi(\mathcal{P}')-\phi(\mathcal{P})
&=2\left(\prod_{j\neq k}2^{n_j-d_{i,j}}\right)2^{n_k-(d_{i,k}-1)}-\left(\prod_{j\neq k}2^{n_j-d_{i,j}}\right)2^{n_k-d_{i,k}}\\
&=3\prod_{j=1}^{r}2^{n_j-d_{i,j}}>0.
\end{aligned}
$$
Thus, the potential strictly increases under such an operation.
By Lemma \ref{thm1}, we have
$$
\phi(\mathcal{P}')\le 2^n\cdot\prod_{k=1}^{r}2^{n_k}= 4^n.
$$
Since $\phi$ is strictly increasing under each such operation and bounded above by $4^n$, this process must stop in finite steps. Hence, repeated applications of this operation must eventually yield a system consisting entirely of full pairs. We may therefore assume without loss of generality that $\mathcal{P} = \{(A_i, B_i) : i \in [m]\}$ is such a system. Consequently,
$
(A_i\cap V_k)\oplus(B_i\cap V_k)=V_k
$
for all $k\in [r]$ and $i\in [m]$.

For a fixed $\mathbf{a}=(a_1,\ldots,a_r)$ with $0\leq a_k\leq n_k$ for each  $k$, we define
$$
I_{\mathbf{a}}= \left \{i\in[m]:(a_{i,1},\ldots,a_{i,r})=(a_1,\ldots,a_r)\right\}.
$$
For any $i\in I_{\mathbf{a}}$ and $k\in[r]$, the fullness implies
$$
\begin{aligned}
    \dim(A_i\cap V_k)=a_k,\
   \dim(B_i\cap V_k)=n_k-a_k.
\end{aligned}
$$
Hence, the subfamily  $\{(A_i,B_i):i\in I_\mathbf{a}\}$ is a  skew Bollob\'{a}s system satisfying the conditions in Theorem \ref{adt1}, so we have
$$
\left | I_{\mathbf{a}} \right |\le\prod_{k=1}^r {a_k+n_k-a_k \choose a_k}=\prod_{k=1}^r {n_k\choose a_k}.
$$
Thus,
$$
\begin{aligned}
\omega(\mathcal{P})&=\sum_{i=1}^{m}\prod_{k=1}^{r}\left[\binom{n_k}{a_{i.k}}(1+n_k)\right]^{-1}\\
&=\left(\prod_{k=1}^{r}\frac{1}{1+n_k}\right)\left(\sum_{i=1}^{m}\prod_{k=1}^{r}\frac{1}{\binom{n_k}{a_{i,k}}}\right)\\
    &=\left(\prod_{k=1}^{r}\frac{1}{1+n_k}\right)\sum_{\mathbf{a}=(a_1,\ldots,a_r)}\sum_{i\in I_{\mathbf{a}}}\frac{1}{\prod_{k=1}^r {n_k\choose a_k}}\\
    &\leq\left(\prod_{k=1}^{r}\frac{1}{1+n_k}\right)\sum_{\mathbf{a}=(a_1,\ldots,a_k)}1\\
    &\le 1.
\end{aligned}
$$
This completes the proof of Theorem \ref{main}.
\end{proof}

\subsection{An alternative proof of Theorem \ref{T89}}

In this subsection, we present a new combinatorial proof of Theorem \ref{T89}.

\begin{proof}[\bf Proof of Theorem \ref{T89}]
Let $\mathcal{P}=\{(A_i^{(1)},\ldots,A_i^{(d)}):i\in[m]\}$ be a weak Bollob\'{a}s system of $d$-tuples of subsets of $[n]$.
By definition, each $d$-tuple occurs at most once in $\mathcal{P}$, and since there are $(d+1)^n$ possible $d$-tuples of pairwise disjoint subsets of $[n]$, we have $m\leq (d+1)^n$. Now we define
$$
\phi(\mathcal{P})=\sum_{i=1}^{m}\sum_{\ell=1}^{d}|A_i^{(\ell)}|.
$$
It follows that $\phi(\mathcal{P})\leq n(d+1)^n$. We define
$$
\omega(\mathcal{P})=\sum_{i=1}^m\prod_{\ell=1}^{d}p_{\ell}^{|A_i^{(\ell)}|}.
$$
    Consider a tuple $(A_i^{(1)},\ldots,A_i^{(d)})\in \mathcal{P}$. If there exists $x\in[n]\setminus\Bigl(\bigcup_{j\in[d]}A_i^{(j)}\Bigr)$,  we can construct $d$ new tuples:
    $$
   \left (A_i^{(1)}\cup\{x\},\ldots,A_i^{(d)}\right),\ldots,   \left (A_i^{(1)},\ldots,A_i^{(d)}\cup\{x\}\right),
    $$
    each is not contained in $\mathcal{P}$. Replacing $(A_i^{(1)},\ldots,A_i^{(d)})$ in  $\mathcal{P}$ with these $d$ new tuples  yields a new weak Bollob\'{a}s  system $\mathcal{P}'$.
    Then
     $$
    \phi(\mathcal{P}')\leq n(d+1)^n.
    $$
     Moreover,
    \begin{align*}
    \omega(\mathcal{P}')-\omega(\mathcal{P})&=\prod_{\ell=1}^{d}p_{\ell}^{|A_i^{(\ell)}|}\big(p_1+\cdots+p_d-1\big)=0,\\
    \phi(\mathcal{P}')-\phi(\mathcal{P})&=d>0.
    \end{align*}
    By  repeated applications of the above replacement operation, we may eventually obtain a weak Bollob\'{a}s  system $\mathcal{P}^*=\{(B_i^{(1)},\ldots,B_i^{(d)}):i\in[m^*]\}$ such that   $\omega(\mathcal{P}^*)=\omega(\mathcal{P})$, and
    $$
   B_i^{(1)}\cup\cdots\cup B_i^{(d)}=[n]~ \text{ for all } ~i\in [m^*].
    $$
    For a fixed $\mathbf{a}=\{a_1,\ldots,a_d\}$ with  $a_1+\cdots +a_d= n$, we define
    $$
    I_{\mathbf{a}}=\left\{i\in[m^*]:(|B_i^{(1)}|,\ldots,|B_i^{(d)}|)=(a_1,\ldots,a_d)\right\}.
    $$
    Note that, here,  for  $n= a_1+\cdots+a_d$, the multinomial coefficient is written as  $\binom{n}{a_1,\ldots,a_d}:=\frac{n!}{a_1!\cdots a_d!}$. Then
    $$
    |I_{\mathbf{a}}|\le\binom{n}{a_1,\ldots,a_d}.
    $$
Consequently,
$$
\begin{aligned}
    \omega(\mathcal{P})&= \omega(\mathcal{P}^*)\\
    &=\sum_{\mathbf{a}=(a_1,\ldots,a_d)}\sum_{i\in I_{\mathbf{a}}}p_1^{a_1}\ldots p_d^{a_d}\\
    &\le\sum_{\mathbf{a}=(a_1,\ldots,a_d)}\binom{n}{a_1,\ldots,a_d}p_1^{a_1}\cdots p_d^{a_d}\\
    &=(p_1+\cdots +p_d)^n=1.
\end{aligned}
$$
This yields the desired result.
\end{proof}

\subsection{Proof of Theorem \ref{main2}}

We begin by recalling a result on skew Bollob\'as systems, which can be found in \cite{E08, O18, W26, Y262}.

\begin{lemma}\label{Y262}
    Let $\mathcal{P}=\{(A_i^{(1)},\ldots,A_i^{(d)}): i\in[m]\}$ be a skew  Bollob\'{a}s system of $d$-tuples of subspaces of a vector space $V\cong\mathbb{R}^n$. Then the following hold:
     \begin{itemize}
    \item[(i)] If  $\dim{(A_i^{(\ell)})}=a_{\ell}$ for every $i\in[m]$ and $\ell\in[d]$, then
    $
    m\le\binom{a_1+\cdots+a_d}{a_1,\ldots,a_d}.
    $
     \item[(ii)] In general, $m\leq d^n$.
\end{itemize}
\end{lemma}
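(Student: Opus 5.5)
We prove (i) by the exterior-algebra (Lovász-type) triangularity method, and then derive (ii) from (i) with the same "fill-up" trick used in the proof of Theorem \ref{T89}.

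\emph{Step 1 (reduction of the ambient dimension).} Set $N=a_1+\cdots+a_d$. We choose a generic linear map $\pi:V\to W\cong\mathbb{R}^N$. Generic means it is injective on each of the finitely many subspaces $A_i^{(1)}+\cdots+A_i^{(d)}$, which have dimension $N$ by condition (i). It should also preserve the relevant non-trivial intersections. The safest way is to pick nonzero witnesses $v_{ij}\in A_i^{(p)}\cap A_j^{(q)}$ for every $i<j$; their images remain common vectors of the projected subspaces. So the projected family is again a skew Bollob\'as system, now with $\pi(A_i^{(1)})\oplus\cdots\oplus\pi(A_i^{(d)})=W$ for every $i$.

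\emph{Step 2 (vectors and a triangular pairing).} For each $i$ and $t\in[d-1]$ put $F_{i,t}=A_i^{(1)}+\cdots+A_i^{(t)}$ and $G_{i,t}=A_i^{(t+1)}+\cdots+A_i^{(d)}$. Define $f_i=\bigotimes_{t=1}^{d-1}\wedge F_{i,t}$ and $g_j=\bigotimes_{t=1}^{d-1}\wedge G_{j,t}$, where $\wedge U$ denotes the wedge of a basis of $U$. Pair them factorwise via $\wedge^{s}W\times\wedge^{N-s}W\to\wedge^N W\cong\mathbb{R}$.
\begin{itemize}
\item For $i=j$ every factor is nonzero, because $F_{i,t}\oplus G_{i,t}=W$.
\item For $i<j$, take the $p<q$ given by the skew condition and look at the factor $t=p$. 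There $F_{i,p}\supseteq A_i^{(p)}$ and $G_{j,p}\supseteq A_j^{(q)}$ intersect nontrivially, so that factor, and hence the whole pairing, vanishes.
\end{itemize}
Thus the $m\times m$ pairing matrix is triangular with nonzero diagonal, and the $f_i$ are linearly independent.

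\emph{Step 3 (the main obstacle: dimension count).} The naive ambient space $\bigotimes_t\wedge^{a_1+\cdots+a_t}W$ has dimension $\prod_t\binom{N}{a_1+\cdots+a_t}$, which is far larger than $\binom{N}{a_1,\ldots,a_d}$. So one must find a smaller space containing all the $f_i$ on which the pairing is still triangular.

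My first attempt is to work in the quotient tower instead. Replace the factor $\wedge F_{i,t}$ by $\wedge$ of the image of $A_i^{(t)}$ in $W/F_{i,t-1}$. This is not canonical, so I would instead use a fixed generic complete flag: project the $t$-th layer to a fixed complement. Then $f_i$ lives in a space of dimension $\binom{N}{a_1}\binom{N-a_1}{a_2}\cdots=\binom{N}{a_1,\ldots,a_d}$.

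Failing that, I would argue by induction on $d$. I would merge the last two components, using the case $d=2$ (Lov\'asz--Frankl, $m\le\binom{a+b}{a}$) inside each block of indices sharing a common first-stage behaviour. This is the step I expect to require genuine care.

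\emph{Step 4 (part (ii)).} Imitate the proof of Theorem \ref{T89}. Suppose a tuple has $A_i^{(1)}\oplus\cdots\oplus A_i^{(d)}\ne V$. Pick $x$ outside this sum and replace the tuple by the $d$ tuples obtained by adjoining $\langle x\rangle$ to the $\ell$-th component, inserted consecutively in the order $\ell=1,\ldots,d$.

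These new tuples remain a skew system. Relations with the other indices are inherited, since the components only grew. Among the new tuples themselves, the earlier one has $x$ in component $p$ and the later one has $x$ in component $q>p$, giving a nonzero intersection.

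The number of tuples increases and the total dimension is bounded, so the process ends with every tuple full. The number of tuples is at least the original $m$, and every component-dimension vector now sums to $n$. Applying (i) to each class with a fixed dimension vector gives
$$m\le\sum_{a_1+\cdots+a_d=n}\binom{n}{a_1,\ldots,a_d}=d^n.$$
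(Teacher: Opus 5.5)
There is a genuine gap, and it is where you suspected: Step 3 is the entire content of (i), and neither of your suggestions there is actually carried out. (The paper does not prove this lemma; it quotes it from the literature, so your argument has to stand on its own.)

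Steps 1 and 2 are correct, but the vectors $f_i$ of Step 2 are the wrong ones for the count. As the flag $F_{i,1}\subset\cdots\subset F_{i,d-1}$ varies, the tensors $\bigotimes_t\wedge F_{i,t}$ span an irreducible Schur module. Its dimension is in general strictly larger than $\binom{N}{a_1,\ldots,a_d}$. Already for $(a_1,a_2,a_3)=(1,1,1)$, the vectors $v\otimes(v\wedge w)$ span the whole $8$-dimensional kernel of $W\otimes\wedge^2W\to\wedge^3W$, while the target is $6$. So there is no smaller universal space for these $f_i$; the vectors themselves must change.

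Your fallback induction does not work as stated either. With subspaces there are infinitely many possible first components, so a factor such as $\binom{N}{a_1}$ can only enter as a dimension ($\dim\wedge^{a_1}W$), never as a count of blocks. Moreover, merging two components destroys the skew property for pairs $i<j$ whose only witness is the merged pair $(p,q)$. An induction would need a bilinear (rank) hypothesis, not the numerical $d=2$ bound.

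Your first idea does work once it is made precise. The quotients $W/F_{i,t-1}$ are not needed, because the skew condition only involves single components.
\begin{itemize}
\item Put $N_t=a_t+\cdots+a_d$ and choose linear maps $\Pi_t:V\to W_t\cong\mathbb{R}^{N_t}$, $t\in[d-1]$. Require $\Pi_t$ to be injective on every $A_i^{(t)}+\cdots+A_i^{(d)}$ (a direct sum of dimension $N_t$) and $\Pi_t(v_{ij})\neq 0$ for every witness $v_{ij}$. These are finitely many nonempty Zariski-open conditions.
\item Let $f_i=\bigotimes_{t=1}^{d-1}\wedge\Pi_t(A_i^{(t)})$ and $g_j=\bigotimes_{t=1}^{d-1}\wedge\Pi_t(A_j^{(t+1)}+\cdots+A_j^{(d)})$, paired factorwise into $\wedge^{N_t}W_t\cong\mathbb{R}$.
\item Every diagonal factor is nonzero, because $\Pi_t$ maps $A_i^{(t)}\oplus\cdots\oplus A_i^{(d)}$ isomorphically onto $W_t$.
\item For $i<j$ with witness $v_{ij}\in A_i^{(p)}\cap A_j^{(q)}$, $p<q$, the factor $t=p$ vanishes. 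Indeed $\Pi_p(v_{ij})\neq 0$ lies in both $\Pi_p(A_i^{(p)})$ and $\Pi_p(A_j^{(p+1)}+\cdots+A_j^{(d)})$.
\end{itemize}
Triangularity then gives $m\le\prod_{t=1}^{d-1}\binom{N_t}{a_t}=\binom{N}{a_1,\ldots,a_d}$, and Step 1 becomes superfluous.

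Given (i), your Step 4 is correct: the inherited relations and the witness $x$ among the new tuples are right. Just argue termination through the per-tuple dimensions $s_i=\dim(A_i^{(1)}+\cdots+A_i^{(d)})$; for example, $\sum_i(d+1)^{n-s_i}$ strictly decreases at each replacement. Do not argue it through a bound on the number of tuples, since that bound is (ii) itself. The paper's fill-up in the proof of Theorem \ref{main2} uses (ii) precisely for this.
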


\begin{proof}[\bf Proof of Theorem \ref{main2}]
Let $\mathcal{P}=\{(A_i^{(1)},\ldots,A_i^{(d)}):i\in[m]\}$ be a skew Bollob\'{a}s  system of subspaces of $V$ with $a_{i}^{(\ell)}=\dim({A_i^{(\ell)}})$.
By Lemma \ref{Y262} (ii), we have  $m\leq d^n$.

Define
$$
\omega(\mathcal{P})=\sum_{i=1}^{m}\prod_{\ell=1}^{d}p_{\ell}^{a_i^{(\ell)}}.
$$
As in the proof of Theorem \ref{main}, we may assume that $
\omega(\mathcal{P})$ is maximal.

Define
$$
\phi(\mathcal{P})=\sum_{i=1}^{m}\sum_{\ell=1}^{d}a_{i}^{(\ell)}
$$
and note that $\phi(\mathcal{P})\leq nd^n$.
We may further assume, by the argument used in the proof of  Theorem \ref{T89}, that
 $\mathcal{P}=\{(A_i^{(1)},\ldots,A_i^{(d)}):i\in[m]\}$  satisfies
    $$
   A_i^{(1)}\oplus\cdots\oplus A_i^{(d)}=V~ \text{ for all } ~i\in [m].
    $$
 For a fixed $\mathbf{a}=\{a_1,\ldots,a_d\}$ with  $a_1+\cdots +a_d= n$, we define
    $$
    I_{\mathbf{a}}=\left\{i\in[m] : (a_{i}^{(1)},\ldots,a_{i}^{(d)})=(a_1,\ldots,a_d)\right\}.
    $$
Since the subfamily $\{(A_i^{(1)},\ldots,A_i^{(d)}):i\in I_\mathbf{a}\}$ is a  skew Bollob\'{a}s system,  applying Lemma \ref{Y262} (i), it yields
$$
\left | I_{\mathbf{a}} \right |\le\binom{a_1+\cdots+a_d}{a_1,\ldots,a_d}=\binom{n}{a_1,\ldots,a_d}.
$$
Consequently,
$$
\begin{aligned}
    \omega(\mathcal{P})&=\sum_{i=1}^{m}p_1^{a_i^{(1)}}\ldots p_d^{a_i^{(d)}}\\
    &=\sum_{\mathbf{a}=(a_1,\ldots,a_d)}\sum_{i\in I_{\mathbf{a}}}p_1^{a_1}\ldots p_d^{a_d}\\
    &\le\sum_{\mathbf{a}=(a_1,\ldots,a_d)}\binom{n}{a_1,\ldots,a_d} p_1^{a_1}\ldots p_d^{a_d}\\
    &=(p_1+\ldots p_d)^n=1.
\end{aligned}
$$
This completes the proof.
\end{proof}

\section{Concluding Remarks}
A natural question arising from our work concerns the weak Bollob\'{a}s condition for subspaces. In the set setting, Tuza's theorem holds for weak Bollob\'as systems, where the nonempty intersection condition is required only for some ordered pair of components. However, as noted in Section \ref{se1}, a direct subspace analogue of the weak condition does not admit a uniform bound analogous to Lemma \ref{Y262} (i), which is essential for the weight invariance argument.
We propose the following open problem regarding the weak Bollob\'{a}s  system of subspaces.

\begin{problem}
    Let  $V$ be an $n$ dimensional real vector space, and let  $p_1,\ldots,p_d$ be arbitrary positive real numbers with $p_1+\ldots+p_d=1$.
    Suppose that $\mathcal{P}=\{(A_i^{(1)},\ldots,A_i^{(d)}):i\in[m]\}$ is a weak Bollob\'{a}s  system of subspaces of $V$. Let  $a_{i}^{(\ell)}=\dim({A_i^{(\ell)}})$ for $\ell\in[d]$ and $i\in[m]$. Then does the inequality
    $$
\sum_{i=1}^{m}\prod_{\ell=1}^{d}p_{\ell}^{a_i^{(\ell)}}
    \le 1
    $$
    holds? Or whether additional structural assumptions are necessary?
\end{problem}

%Another natural direction for further research is to consider skew Bollob��s systems of $d-$tuples equipped with a direct sum decomposition of space. The following conjecture would serve as a common generalization of Theorems \ref{main}.
%\begin{conjecture}
%       Let $V_1,\ldots,V_r$ be disjoint subspaces of $V\cong\mathbb{R}^n$, for which $V_1\oplus\ldots\oplus V_r=V$. Suupose that $\mathcal{P}=\{(A_i^{(1)},\ldots,A_i^{(d)}):i\in[m]\}$ is a skew Bollob\'{a}s system of $d$-tuples of $V$. Let $\dim{A_i^{(e)}\cap V_k}=a_{i,k}^e$ and $s_k=\dim{(\oplus_{i=1}^m\oplus_{e=1}^dA_i^{(e)})\cap V_k}$ for $k\in[r]$. Then
%    $$
%    \sum_{i=1}^{m}\left [\prod_{k=1}^{r}\binom{\sum_{e=1}^da_{i,k}^e}{a_{i,k}^1,\ldots,a_{i,k}^d}\right ]^{-1}\le\prod_{k=1}^r\binom{s_k+d-1}{d-1}.
%    $$
%\end{conjecture}
%If true, this would unify Theorem \ref{main} (the case $d=2$), and would serve as a common extension.

\section*{Declaration of competing interest}We declare that we have no conflict of interest to this work.

\section*{Data availability}No data was used for the research described in the article.

% \section*{Acknowledgement}
%The authors would like to express their sincere thanks to the referee for the valuable suggestions which greatly improved the presentation of the %manuscript.

\end{document}